\newcommand{\R}{\ensuremath{\mathbb{R}}}
\newcommand{\N}{\ensuremath{\mathbb{N}}}
\newcommand{\ov}{\overline}
\newcommand{\T}{\theta}
\newcommand{\f}{\varphi}
\newcommand{\dom}{\mathrm{dom~}}
\renewcommand{\Im}{{\rm Im~}}
\renewcommand{\ker}{{\rm ker~}}
\newcommand{\coker}{{\rm coker~}}
\newcommand{\sgn}{\mathrm{sign}}
\newcommand{\inte}{\mathrm{int}}
\def\p{\partial}
\def\e{\varepsilon}
\newtheorem {theorem} {Theorem}
\newtheorem {proposition}{Proposition}
\newtheorem {example} {Example}
\newtheorem {remark}{Remark}
\newtheorem {mtheorem} {Theorem}
\begin{document}

\title[Averaging method for Carath\'{e}odory differential equations]
{An averaging result for periodic solutions of\\ Carath\'{e}odory differential equations}

\author[D.D. Novaes]
{Douglas D. Novaes}

\address{Departamento de Matem\'{a}tica - Instituto de Matem\'{a}tica, Estat\'{i}stica e Computa\c{c}\~{a}o Cient\'{i}fica (IMECC) - Universidade Estadual de Campinas (UNICAMP), Rua S\'{e}rgio Buarque de Holanda, 651, Cidade
Universit\'{a}ria Zeferino Vaz, 13083--859, Campinas, SP, Brazil}
\email{ddnovaes@unicamp.br}

\subjclass[2010]{34C29, 34C25, 47H11, 34A36}

\keywords{Carath\'{e}odory differential equations, averaging method, periodic solutions, continuation result for operator equations}

\begin{abstract}
This paper is concerned with the problem of existence of periodic solutions for perturbative Carath\'{e}odory differential equations. The main result provides sufficient conditions on the averaged equation that guarantee the existence of periodic solutions. Additional conditions are also provided to ensure the uniform convergence of a periodic solution to a constant function. The proof of the main theorem is mainly based on an abstract continuation result for operator equations.
\end{abstract}

\maketitle


\section{Introduction and statement of the main result}

This paper is concerned with the problem of existence of periodic solutions for differential equations given in the following {\it standard form}:
 \begin{equation}\label{eq:e1}
	x^\prime = \e f(t,x,\e), \quad (t,x,\e)\in\R\times D\times[0,\e_0],
	\end{equation}
where $D$ is an open subset of $\R^n,$ $\e_0>0$, and $f\colon\R\times D\times[0,\e_0]\to \R^n$ is assumed to be $T$ periodic in the variable $t$. Further conditions will be assumed later on.

The averaging method is an important technique for investigating differential equations given in the standard form \eqref{eq:e1}. It has a long history, starting with the development of celestial mechanics by  Clairaut, Laplace, and Lagrange, which was later formalized by the works of Fatou, Krylov, Bogoliubov, and Mitropolsky  \cite{BM,Bo,Fa,BK} (for a historical review, see  \cite[Chapter 6]{MN} and \cite[Appendix A]{sanders2007averaging}). The averaging method concerns about asymptotic estimates for solutions of differential equations. More specifically, it provides conditions under which the solutions of  \eqref{eq:e1} remains $\e$-close to the solutions of the {\it truncated average equation} $x'=\e f_1(x),$ for an order $\e^{-1}$ interval of time. Here, assuming that,
for each $z\in D,$ the map $t\in\R\mapsto f(t,z,0)$ is integrable and $T$-periodic, $f_1(z)$ denotes its average in the variable $t$, that is,
\[
f_1(z)=\dfrac{1}{T}\int_0^T f(t,z,0)d t.
\]

Periodic trajectories are of major importance in the understanding of the qualitative behavior of differential equations. The averaging method is one of the most powerful technique for investigating periodic solutions of differential equations given as \eqref{eq:e1}.
One can find many results in the classical research literature relating isolated zeros of the {\it averaged function} $f_1(z)$
with $T$-periodic solutions of \eqref{eq:e1} (see, for instance, \cite{Hale,sanders2007averaging,Ver06}).
 The aforementioned results assume smoothness of $f$. These results have been generalized in several directions: in \cite{buicua2004averaging,LliNovTei2014,NS21}, for nonsmooth continuous differential equations; in \cite{LliMerNov15,LliNovRod17,LlilNovTei15}, for several classes of discontinuous piecewise smooth differential equations; and, in \cite{CMZ21}, for abstract semilinear equations.

 In this paper, it is assumed that the differential equation \eqref{eq:e1} satisfies the following Carath\'{e}odory conditions:
\begin{itemize}
\item[{\bf A.1}] $(x,\e)\in D\times[0,\e_0]\mapsto f(t,x,\e)$ is continuous, for almost every $t\in[0,T]$;

\smallskip

\item[{\bf A.2}] $t\in[0,T]\mapsto f(t,x,\e)$ is measurable, for every $(x,\e)\in D\times[0,\e_0];$

\smallskip

\item[{\bf A.3}] for each $r>0$, there exists an integrable function $g_r:[0,T]\to\R$ such that, for almost every $t\in[0,T]$, $|f(t,x,\e)|\leq g_r(t)$ provided that $|(x,\e)|\leq r.$
\end{itemize}
Accordingly, \eqref{eq:e1} is called {\it Carath\'{e}odory differential equation} (see \cite{kur86}).

Notice that, as consequences of the dominate convergence theorem, conditions {\bf A.2} and {\bf A.3} allow the computation of the averaged function $f_1(z)$ which, taking conditions {\bf A.1} and {\bf A.3} into account, is continuous on $D.$

There are some results in the research literature concerning periodic solutions for Carath\'{e}odory differential equations. In \cite{mawhin79}, an abstract continuation result for operator equations \cite[Theorem IV.13]{mawhin79}, due to Mawhin \cite{mawhin69a,mawhin69b} (see also \cite[Corollary IV.1]{gaines1977coincidence}), was used to prove a {\it guiding function method} \cite[Theorem VI.2]{mawhin79} that provides sufficient conditions for the existence of periodic solutions for Carath\'{e}odory differential equations. In \cite{CapMawZan92}, by using an approximation procedure, \cite[Theorem 1]{CapMawZan92} established that the coincidence degree of an autonomous differential equation $x'-g(x)=0$ can be computed in terms of the Brouwer degree of $g$ and, then, it was used to provide a continuation result \cite[Theorem 2]{CapMawZan92} for periodic solutions of Carath\'{e}odory differential systems given by $x'=g(x)+e(t,x)$.  The above studies treated the problem of existence of periodic solutions by means of topological methods.

In this paper, an abstract continuation result for operator equations, \cite[Theorem IV.2]{gaines1977coincidence}, 
will be used to provide sufficient conditions on the averaged function $f_1(z)$ that guarantee the existence of periodic solutions for the Carath\'{e}odory differential equation \eqref{eq:e1}. Additional conditions will be also provided to ensure that  a periodic solution converges uniformly to a constant function as $\e$ goes to $0$. In what follows, $d_B$ denotes the Brouwer degree (the reader is referred to Section \ref{sec:contres} for its formal definition).

\begin{mtheorem}\label{main}
Consider the Carath\'{e}odory differential equation \eqref{eq:e1}. Assume that there exists an open bounded set $V\subset\R^n,$ with $\ov V\subset D,$ such that $f_1(z)\neq0,$ for every $z\in \p V,$ and $d_B(f_1,V,0)\neq 0.$ Then, there exists $\e_V>0$ such that, for each $\e\in[0,\e_V]$, the differential equation \eqref{eq:e1} has a $T-$periodic solution $\f(t,\e)$ satisfying $\f(t,\e)\in V,$ for every $t\in[0,T].$ In addition, if there exists $z^*\in V$ such that $f_1(z^*)=0$ and $f_1(z)\neq0$ for every $z\in \ov{V}\setminus\{z^*\},$ then $\f(\cdot,\e)\to z^*,$ uniformly, as $\e\to 0.$
\end{mtheorem}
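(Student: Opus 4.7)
The plan is to cast the problem as a coincidence equation $Lx = N_\e x$ on the Banach space $X = \{x \in C([0,T],\R^n) : x(0) = x(T)\}$ endowed with the sup norm, where $Lx = x'$ on the subspace of absolutely continuous functions satisfying $x(0) = x(T)$, viewed as mapping into $Z = L^1([0,T],\R^n)$, and $(N_\e x)(t) = \e f(t, x(t), \e)$. Since $\ker L = \R^n$ consists of the constant functions and $\Im L$ is the codimension-$n$ subspace of mean-zero $L^1$ functions, $L$ is Fredholm of index zero, and the projections $P : X \to \ker L$ and $Q : Z \to \coker L$ can both be taken as the average-value operator. The Carath\'eodory assumptions {\bf A.1}--{\bf A.3} ensure that $N_\e$ is continuous from $X$ into $Z$ and, via the integrable bound $g_r$ of {\bf A.3} combined with Arzel\`a--Ascoli, that $N_\e$ is $L$-compact on bounded sets, so the Gaines--Mawhin continuation result applies.

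Set $\Omega = \{x \in X : x(t) \in V \text{ for every } t \in [0,T]\}$, an open bounded subset of $X$, and consider the homotopy $Lx = \la N_\e x$ for $\la \in (0,1]$. Any such solution satisfies $x(t) = x(0) + \la\e \int_0^t f(s,x(s),\e)\, ds$, so by {\bf A.3} its oscillation is at most $\e \|g_r\|_{L^1}$ with $r$ determined by $\ov V$. Writing $z = Px$ for the mean, we obtain $\|x - z\|_\infty = \CO(\e)$. Integrating the equation over a period gives $\int_0^T f(t,x(t),\e)\, dt = 0$; combining this with dominated convergence and {\bf A.1} shows that the means of any such solutions must cluster, as $\e \to 0$, at zeros of $f_1$. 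Since by hypothesis the zero set of $f_1$ in $\ov V$ is compact and contained in $V$, there exists $\e_V>0$ such that for every $\e \in (0,\e_V]$ no solution of $Lx = \la N_\e x$ can touch $\p V$, ruling out solutions on $\p\Omega \cap \dom L$ for $\la \in (0,1]$.

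The remaining hypothesis of the continuation theorem is a Brouwer-degree condition on $QN_\e$ restricted to $\ker L$: for a constant $z \in V$ one has $QN_\e(z) = \e \ov f_\e(z)$, with $\ov f_\e(z) = T^{-1}\int_0^T f(t,z,\e)\, dt$. By {\bf A.1}, {\bf A.3} and dominated convergence, $\ov f_\e \to f_1$ uniformly on $\ov V$ as $\e \to 0$. Together with $f_1 \neq 0$ on $\p V$, this ensures that for small $\e$ the straight-line homotopy from $\ov f_\e$ to $f_1$ is admissible on $V$, so $d_B(\ov f_\e, V, 0) = d_B(f_1, V, 0) \neq 0$. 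Invoking the continuation theorem therefore produces a $T$-periodic solution $\f(\cdot,\e) \in \ov\Omega$ of \eqref{eq:e1}, and the exclusion of boundary solutions established above actually places it in $\Omega$, i.e. $\f(t,\e) \in V$ for every $t \in [0,T]$.

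For the final statement, assume $z^* \in V$ is the unique zero of $f_1$ in $\ov V$. Given any sequence $\e_k \to 0$, the functions $\f(\cdot,\e_k)$ take values in $\ov V$ and, by the $\CO(\e_k)$ oscillation bound, are equicontinuous with vanishing modulus, so every subsequential uniform limit is a constant $z_\infty \in \ov V$ which, by the integration argument above, satisfies $f_1(z_\infty) = 0$, forcing $z_\infty = z^*$. A standard subsequence argument then yields $\f(\cdot,\e) \to z^*$ uniformly as $\e \to 0$. The most delicate points will be verifying $L$-compactness of $N_\e$ and the passage to the limit $\ov f_\e \to f_1$ using only the integrable majorant $g_r$ provided by {\bf A.3}, rather than a uniform bound on $f$.
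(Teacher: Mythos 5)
Your proposal is correct, but it follows a genuinely different route from the paper. The paper casts the problem as $Lx=\e N(x,\e)$ with $Lx(t)=x(t)-x(0)$ and the integral operator $N(x,\e)(t)=\int_0^t f(s,x(s),\e)\,ds$ acting between spaces of \emph{continuous} functions, and then invokes the perturbative continuation result \cite[Theorem IV.2]{gaines1977coincidence}, whose hypotheses involve only $QN(\cdot,0)$; since $JQN(x,0)=f_1(z)$ for constant loops, the degree hypothesis on $f_1$ is used verbatim and no a priori bounds along any homotopy are required. You instead take $L=d/dt$ into $L^1$ with the Nemytskii operator and use the classical Mawhin continuation theorem, which obliges you to do two extra pieces of work: the exclusion of solutions of $Lx=\la N_\e x$ from $\p\Omega$ (your compactness argument via the $\CO(\e)$ oscillation bound and $\int_0^T f(t,x(t),\e)\,dt=0$ is sound, and is in effect a proof of the special case of Theorem IV.2 that is needed here), and the transfer of the degree from $\ov f_\e$ to $f_1$, where you should note that the uniform convergence $\ov f_\e\to f_1$ on $\ov V$ requires the ``continuous convergence on a compact set'' upgrade of the pointwise dominated-convergence limit, not dominated convergence alone. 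What your approach buys is a more self-contained argument resting on the most standard version of the continuation theorem; what the paper's choice buys is the complete elimination of a priori estimates. For the convergence statement your argument is actually cleaner and stronger than the paper's: the paper builds a convergent family by applying the first part to shrinking balls $B(z^*,\mu)$ and then assembling the solutions through a case analysis on the thresholds $\e_{V_\mu}$, whereas your equicontinuity-plus-uniqueness-of-the-zero argument shows that \emph{every} family of $T$-periodic solutions with values in $\ov V$ converges uniformly to $z^*$.
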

	
\begin{remark}\label{rem}Under the assumptions ``$z\mapsto f_1(z) $ is differentiable in a neighbourhood of $z=z^*,$  $f_1(z^*)=0,$ and $f_1(z)\neq0$ for every $z\in\ov V\setminus\{z^*\}$'',   the condition $d_B(f_1,V,0)\neq0$ holds provided that  $\det(D f_1(z^*))\neq0$ and, in that case, $d_B(f_1,V,0)=\sgn(\det(D f_1(z^*)))$.
\end{remark}

Usually, the averaging method for studying periodic solutions, even for continuous differential equations, assumes a smooth dependence of $f$ on the parameter of perturbation $\e$. Also, the existence of periodic solutions strongly relies on the existence of isolated zeros of the averaged function $f_1$ (see, for instance, \cite[Theorem 1.2]{buicua2004averaging} and \cite[Corollary 2.2]{NS21}). Here, Theorem \ref{main}, in addition to extending the previous results for Carath\'{e}odory differential equations, relaxes such a smoothness assumption on $f$ and the existence of periodic solutions is provided regardless the existence of isolated zeros of $f_1$. Its proof, based on the degree theory, is similar to the ones performed in \cite{NS21}. Here, the dominate convergence theorem is needed for defining suitable operators in order to employ an abstract continuation result for operator equations. 

The degree theory and an abstract continuation result for operator equations are presented in Section \ref{sec:contres}. Theorem \ref{main} is, then, proven in Section \ref{sec:proof}. Section \ref{sec:exa} is devoted to investigate some examples.

\section{Continuation result for operator equations}\label{sec:contres}

This section presents a useful abstract continuation result, Theorem \ref{t.aux}, 
which provides sufficient conditions for the existence of solutions of the operator equation
\begin{align}\label{eq:opeq}
Lx = \e{N}(x,\e),\,\, x\in\ov\Omega,
\end{align}
where $ L:\dom L\subset X\to Y$ is a linear map between real normed vector spaces $ X $ and $ Y,$ $\dom L$ is a subspace of $X,$ $\e\in[0,\e_0],$ with $\e_0>0$, $\Omega$ is an open bounded subset of $ X$ such that $\ov \Omega\subset\dom L,$  and $ N:\ov \Omega\times[0,\e_0]\to Y $ is any map.

In addition, it is assumed that $ L $ is {\it Fredholm of index $0$} and that $N$ is   $ L- ${\it compact}. Following \cite[Chapter I]{mawhin79}, such concepts are introduced in the sequel.

The linear map $ L $ is said to be Fredholm of index $0$ if $\ker L$ has finite dimension, $\Im L$ is closed in $Y$ and has finite codimension (that is  $\coker L = Y/\Im L$ has finite dimension), and the dimension of $\ker L$ coincides with the codimension of $\Im L$.
 In this case, there exist continuous projections $ P:X\to X $ and $ Q:Y\to Y $ satisfying 
\begin{equation}\label{fred}
\Im P = \ker L\,\,\text{ and }\,\,\ker Q=\Im L,
\end{equation}
which implies that
\[
X=\ker L\oplus\ker P\,\,\text{ and }\,\, Y=\Im L\oplus \Im Q.
\]
Notice that $\Im Q$ is isomorphic to $\coker L$ and, consequently, isomorphic to $\ker L$. In addition, one can see that $ L_P = L\big|_{\ker P~\cap~\dom L} $ is an isomorphism onto $\Im L$. Accordingly, denote 
\[
K_P=L_P^{-1}:\Im L\to \ker P\cap\dom L
\] and define $K_{P,Q} = K_P (Id- Q). $ 
 
The map  $N:\ov \Omega\times[0,\e_0]\to Y $ is said to be {\it $ L- $compact} on $\ov \Omega\times[0,\e_0]$ if the maps $ K_{P,Q} N:\ov \Omega\times[0,\e_0]\to X$ and $Q N:\ov \Omega\times[0,\e_0]\to Y$ are
continuous and the subsets $K_{P,Q} N (\ov \Omega\times[0,\e_0]) $  and $Q N(\ov \Omega\times[0,\e_0]) $  are relatively compact on $X$ and $Y$, respectively. It can be seen that the definition of $ L- $compactness does not depend on the choices of $ P $ and $ Q$ satisfying \eqref{fred}.

\begin{theorem}[{\cite[Theorem IV.2]{gaines1977coincidence}}]\label{t.aux}
Let $L,$ $N,$ and $Q$ be like above and $J:\Im Q\to\ker L$ any isomorphism. Assume that the following conditions are verified:
\begin{itemize}

\item[{\bf H.1}]$ Q{N}(x,0)\neq 0, $ for every $ x\in\partial\Omega\cap\ker L;$ and

\item[{\bf H.2}]$ d_B(JQ{N}(\cdot,0)\big|_{\Omega\cap\ker L},\Omega\cap\ker L,0)\neq 0$.
\end{itemize}
Then, there exists $\e_1\in(0,\e_0]$ such that, for each $\e\in[0,\e_1]$, the operator equation \eqref{eq:opeq} admits a solution in $ \Omega$.
\end{theorem}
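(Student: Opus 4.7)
The plan is to convert the operator equation \eqref{eq:opeq} into a compact fixed--point equation on $\overline{\Omega}$ and then apply Leray--Schauder degree. Using the Fredholm decomposition, define
\[
M(x,\e) = Px + JQ{N}(x,\e) + \e K_{P,Q}{N}(x,\e),\qquad (x,\e)\in\overline{\Omega}\times[0,\e_0].
\]
The first step is to verify that $x = M(x,\e)$ is equivalent to $Lx = \e{N}(x,\e)$. One direction follows by applying $P$ to a fixed point (which forces $JQN(x,\e)=0$, hence $QN(x,\e)=0$ since $J$ is an isomorphism, and then $L$ applied to the remaining identity yields $Lx=\e N(x,\e)$); the converse uses that $Lx\in\Im L$ forces $QN(x,\e)=0$, after which the definition of $K_P$ gives $x-Px=\e K_{P,Q}N(x,\e)$. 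Compactness of $M$ on $\overline{\Omega}\times[0,\e_0]$ follows because $P$ has range in the finite-dimensional space $\ker L$, $JQN$ is $L$-compact (and takes values in $\ker L$), and $\e K_{P,Q}N$ is $L$-compact by hypothesis; hence $x\mapsto x-M(x,\e)$ is a compact perturbation of the identity and the Leray--Schauder degree $d_{LS}(I-M(\cdot,\e),\Omega,0)$ is well defined whenever $x\neq M(x,\e)$ on $\partial\Omega$.

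The second step is to evaluate this degree at $\e=0$. Observe that
\[
I - M(\cdot,0) = (I-P) - JQ{N}(\cdot,0),
\]
and that both $P$ and $JQ{N}(\cdot,0)$ take values in the finite-dimensional subspace $\ker L$. By the standard reduction formula for Leray--Schauder degree (compact perturbation of the identity with range in a finite-dimensional subspace reduces to a Brouwer degree on the intersection),
\[
d_{LS}\!\left(I-M(\cdot,0),\Omega,0\right) = d_B\!\left(-JQ{N}(\cdot,0)\big|_{\Omega\cap\ker L},\,\Omega\cap\ker L,\,0\right),
\]
which, up to the sign $(-1)^{\dim\ker L}$, equals $d_B(JQ{N}(\cdot,0)|_{\Omega\cap\ker L},\Omega\cap\ker L,0)$. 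By hypothesis \textbf{H.2} this is nonzero. Hypothesis \textbf{H.1} guarantees that $I-M(\cdot,0)$ does not vanish on $\partial\Omega$: if $(I-P)x = JQ{N}(x,0)$ with $x\in\partial\Omega$, then since the left-hand side lies in $\ker P$ and the right-hand side in $\Im P=\ker L$, both sides vanish, forcing $x\in\ker L\cap\partial\Omega$ and $QN(x,0)=0$, contradicting \textbf{H.1}.

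The third step is a compactness-plus-continuity argument to extend nonvanishing of $I-M(\cdot,\e)$ on $\partial\Omega$ uniformly for small $\e$. If no such $\e_1$ existed, one could extract sequences $\e_n\to 0$ and $x_n\in\partial\Omega$ with $x_n=M(x_n,\e_n)$; the relative compactness of $M(\overline{\Omega}\times[0,\e_0])$ yields (along a subsequence) a limit $x^\ast\in\partial\Omega$, and joint continuity of $M$ gives $x^\ast = M(x^\ast,0)$, contradicting the boundary nonvanishing at $\e=0$ established above. Once this is in hand, the homotopy invariance of the Leray--Schauder degree along $\e\mapsto I-M(\cdot,\e)$ on $[0,\e_1]$ yields
\[
d_{LS}\!\left(I-M(\cdot,\e),\Omega,0\right) = d_{LS}\!\left(I-M(\cdot,0),\Omega,0\right) \neq 0,
\]
and the existence property of the degree furnishes a fixed point of $M(\cdot,\e)$ in $\Omega$, i.e.\ a solution of \eqref{eq:opeq}.

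The main obstacle I anticipate is the reduction-to-finite-dimension identification of the Leray--Schauder degree at $\e=0$ with the Brouwer degree appearing in \textbf{H.2}: it requires a careful choice of the finite-dimensional subspace (the image of $P$ plus the image of $JQN(\cdot,0)$, both contained in $\ker L$) and a check that the sign ambiguity $(-1)^{\dim\ker L}$ does not destroy the nonvanishing. The compactness argument producing $\e_1$ is conceptually simple but relies crucially on the $L$-compactness assumption on $N$ on the \emph{product} $\overline{\Omega}\times[0,\e_0]$, not just on each slice.
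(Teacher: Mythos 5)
The paper does not prove this statement: it is imported verbatim from Gaines--Mawhin \cite[Theorem IV.2]{gaines1977coincidence}, so there is no internal proof to compare against. Your argument is, in substance, the standard proof of that continuation theorem: reduce $Lx=\e N(x,\e)$ to the fixed-point problem for the compact map $M(\cdot,\e)=P+JQ N(\cdot,\e)+\e K_{P,Q}N(\cdot,\e)$, identify the Leray--Schauder degree at $\e=0$ with (a sign times) the Brouwer degree in {\bf H.2} via the finite-dimensional reduction onto $\ker L$, and use compactness to propagate the boundary nonvanishing to small $\e>0$ before invoking homotopy invariance. All three steps check out, including the key structural choice of \emph{not} multiplying the $JQN$ term by $\e$, without which the degree at $\e=0$ would degenerate. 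Two minor remarks. First, the ``equivalence'' in your first step is genuinely only one-directional at $\e=0$: a solution of $Lx=0$ is any element of $\ker L\cap\ov\Omega$ and need not satisfy $QN(x,0)=0$, so $x=M(x,0)$ is strictly stronger than \eqref{eq:opeq} at $\e=0$; but only the implication ``fixed point of $M(\cdot,\e)$ $\Rightarrow$ solution of \eqref{eq:opeq}'' is used in the conclusion, so nothing is lost. Second, the relative compactness of $M(\ov\Omega\times[0,\e_0])$ deserves one more line than you give it: $P(\ov\Omega)$ is a bounded subset of the finite-dimensional space $\ker L$, $JQ N(\ov\Omega\times[0,\e_0])$ is relatively compact because $QN$ has relatively compact image and $J$ is continuous, and $\{\e k:\e\in[0,\e_0],\ k\in K_{P,Q}N(\ov\Omega\times[0,\e_0])\}$ is contained in the continuous image of a product of compacta; your insistence that $L$-compactness is assumed on the product $\ov\Omega\times[0,\e_0]$, not slicewise, is exactly the right point to stress.
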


Theorem \ref{t.aux} makes use of the concept of Brouwer degree $d_B$, which is defined as follows  (see \cite{zeidler}). Let $ V\subset\R^n $ be an open bounded subset of $ \R^n, $ $ g:\ov{V}\to \R^n $ a continuous function, and $y_0\notin g(\p V).$ The Brouwer degree $d_B(g,V,y_0)$ is characterized as the unique integer-valued function satisfying the following properties:
\begin{itemize}
	\item[{\bf P.1}] If $ d_B(g,V,y_0)\neq 0, $ then $ y_0\in g(V). $ Furthermore, if $ \mathbb{1}:\ov{V}\to\R^n $ is the identity function and $ y_0\in V, $ then $ d_B(\mathbb{1}, V, y_0) = 1. $
	\item[{\bf P.2}] If $V_1, V_2\subset V$ are disjoint open subsets of $V$ such 
	that $y_0\notin g(\ov{V}\backslash (V_1\cup V_2)),$ then $$d_B(g, V, y_0) = 
	d_B(g\big|_{V_1}, V_1, y_0) + d_B(g\big|_ {V_2}, V_2, y_0).$$
	\item[{\bf P.3}] If $ \{g_{\sigma}:\ov{V}\to\R^n\, |\, \sigma\in [0,1] \} $ is a continuous homotopy and $ \{ y_{\sigma}\, |\, \sigma\in [0,1] \} $ is a continuous curve in $Y$ such that $ y_{\sigma}\notin g_{\sigma}(\p V),\,\forall \sigma\in [0, 1] $ then $ d_B(g_{\sigma},V,y_{\sigma}) $ is constant for $ \sigma\in[0,1]. $
\end{itemize}

\section{Proof of Theorem \ref{main}}\label{sec:proof}
The proof of Theorem \ref{main} will follow directly from Propositions \ref{prop1} and \ref{prop2}.

\begin{proposition}\label{prop1}
Consider the Carath\'{e}odory differential equation \eqref{eq:e1}. Assume that there exists an open bounded set $V\subset\R^n,$ with $\ov V\subset D,$ such that $f_1(z)\neq0,$ for every $z\in \p V,$ and $d_B(f_1,V,0)\neq 0.$ Then, there exists $\e_V>0$ such that, for each $\e\in[0, \e_V]$, the differential equation \eqref{eq:e1} has a $T-$periodic solution $\f(t,\e)$ satisfying $\f(t,\e)\in V,$ for every $t\in[0,T].$
\end{proposition}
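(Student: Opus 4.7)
The plan is to realize $T$-periodic solutions of \eqref{eq:e1} as solutions of an abstract operator equation $Lx=\e N(x,\e)$ and to apply Theorem \ref{t.aux}. I would take $X$ to be the space of continuous $T$-periodic functions $\R\to\R^n$ with the sup norm, $Y=L^1([0,T],\R^n)$, and $L\colon\dom L\subset X\to Y$ given by $Lx=x'$, with $\dom L$ the subspace of absolutely continuous $T$-periodic functions. The nonlinear part is the Nemytskii-type operator $N(x,\e)(t)=f(t,x(t),\e)$, which maps $X\times[0,\e_0]$ into $Y$ thanks to \textbf{A.1}--\textbf{A.3}. Then $\ker L$ is the $n$-dimensional space of constants, $\Im L=\{y\in Y:\int_0^T y(t)dt=0\}$ is closed of codimension $n$, and $L$ is Fredholm of index $0$. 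I would use the averaging projections $Px=Qx=\frac{1}{T}\int_0^T x(t)dt$ (viewed as a constant function) and $J\colon\Im Q\to\ker L$ the natural identification with $\R^n$; then $K_{P,Q}N$ is given by integration of $(I-Q)N(x,\e)$ normalized to have zero mean.

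The natural candidate for $\Omega$ is
\[
\Omega=\{x\in X:x(t)\in V\text{ for every }t\in[0,T]\},
\]
which is open and bounded in $X$ since $V$ is. Under the identification of $\ker L$ with $\R^n$, one has $\Omega\cap\ker L=V$ and $\p\Omega\cap\ker L=\p V$. Consequently $QN(z,0)=f_1(z)$ for a constant $z$, so $QN(\cdot,0)|_{\p\Omega\cap\ker L}$ is precisely $f_1|_{\p V}$, which does not vanish by hypothesis; this gives \textbf{H.1}. Similarly $JQN(\cdot,0)|_{\Omega\cap\ker L}$ is $f_1|_V$, so
\[
d_B\bigl(JQN(\cdot,0)|_{\Omega\cap\ker L},\Omega\cap\ker L,0\bigr)=d_B(f_1,V,0)\neq 0,
\]
yielding \textbf{H.2}. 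Theorem \ref{t.aux} then produces $\e_V>0$ and, for each $\e\in[0,\e_V]$, a solution $\f(\cdot,\e)\in\Omega$ of $Lx=\e N(x,\e)$, which by construction is an absolutely continuous $T$-periodic solution of \eqref{eq:e1} with $\f(t,\e)\in V$ for all $t\in[0,T]$.

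The step I expect to absorb almost all of the work is the verification that $N$ is $L$-compact on $\ov\Omega\times[0,\e_0]$. For any $r$ bounding $\Omega\cup\{0,\e_0\}$, condition \textbf{A.3} provides an integrable dominating function $g_r$ with $|N(x,\e)(t)|\le g_r(t)$ for all $(x,\e)\in\ov\Omega\times[0,\e_0]$. This uniform $L^1$ bound immediately gives boundedness of $QN(\ov\Omega\times[0,\e_0])$ in the finite-dimensional space $\Im Q$, and, via the explicit formula for $K_{P,Q}N$, it makes $K_{P,Q}N(\ov\Omega\times[0,\e_0])$ a set of $T$-periodic absolutely continuous functions whose derivatives are dominated by an integrable function independent of $(x,\e)$; hence this set is uniformly bounded and equicontinuous, so relatively compact in $X$ by Arzel\`a--Ascoli. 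The delicate point is continuity of $QN$ and $K_{P,Q}N$ in $(x,\e)$: since $f$ is only Carath\'eodory, one cannot argue from continuity of $f$ pointwise in $t$, and one must instead combine \textbf{A.1} (pointwise continuity in $(x,\e)$ for a.e.\ $t$) with the uniform bound $g_r$ from \textbf{A.3} and invoke Lebesgue's dominated convergence theorem to pass to the limit under the integral sign. This is the place where the Carath\'eodory framework is genuinely used, and it is exactly the ingredient the introduction highlights as the main technical novelty relative to the smooth and continuous versions in the literature.
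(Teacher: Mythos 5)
Your proposal is essentially correct, and it reaches the same two hypotheses \textbf{H.1}--\textbf{H.2} with the same identification $QN(z,0)=f_1(z)$ on constants; but it takes the classical Mawhin route rather than the one the paper uses. You set $L=d/dt$ from $\dom L=\{T$-periodic absolutely continuous functions$\}\subset C_T$ into $Y=L^1$, with $N$ the Nemytskii operator $N(x,\e)(t)=f(t,x(t),\e)$. The paper instead works entirely in spaces of continuous functions: it takes $Y=\{y\in C([0,T],\R^n):y(0)=0\}$, the \emph{bounded} operator $Lx(t)=x(t)-x(0)$ defined on all of $X$, and the integral operator $N(x,\e)(t)=\int_0^t f(s,x(s),\e)\,ds$, so that the fixed-point identity $x(t)-x(0)=\e\int_0^t f(s,x(s),\e)\,ds$ directly encodes the Carath\'eodory notion of solution. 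The substance is the same in both versions --- dominated convergence (via \textbf{A.1} and \textbf{A.3}) for continuity of $QN$ and $K_{P,Q}N$, and absolute continuity of $t\mapsto\int_0^t g_{r}$ plus Arzel\`a--Ascoli for relative compactness of $K_{P,Q}N(\ov\Omega\times[0,\e_0])$ --- and your identification of this as the place where the Carath\'eodory hypotheses are genuinely used is exactly right.

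One caveat you should address: Theorem \ref{t.aux} as stated in this paper requires $\ov\Omega\subset\dom L$. In your setup $\Omega=\{x\in X: x(t)\in V\ \forall t\}$ contains continuous periodic functions that are not absolutely continuous, so $\ov\Omega\not\subset\dom L$ and you cannot invoke the theorem in the form given here; you would need the original formulation in \cite[Theorem IV.2]{gaines1977coincidence}, where solutions are sought in $\dom L\cap\ov\Omega$ and no such inclusion is assumed. The paper's choice of $L$ as a bounded operator on all of $X$ is precisely what makes the hypothesis $\ov\Omega\subset\dom L$ trivially true and keeps the argument self-contained; this, together with avoiding $L^1$ as a target space, is what the reformulation buys. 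With that substitution (or by checking the hypotheses of the original theorem), your argument goes through.
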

\begin{proof}
The first step of this proof consists in establishing suitable spaces and operators for applying Theorem \ref{t.aux}. It is worth mentioning that the initial framework of this proof is similar to the proof of \cite[Theorem A]{NS21}.

Consider the following real Banach spaces \[  X= \{ x\in C([0, T],\R^n)\colon~ x(0) = 
x(T)\} \,\mbox{ and }\, Y = \{ x\in C([0, T],\R^n)\colon~ x(0) = 0\},\]
and let $\Omega$ be the following open bounded subset of $ X,$
\[
\Omega = \{ x\in X\colon x(t)\in V,\,\forall\, t\in [0, T] \}.
\] 
As usual, $C([0, T],\R^n)$ denotes the space of continuous functions $x:[0,T]\to\R^n$ endowed with the sup-norm.

Define the linear map $ L\colon X\to Y $ by
\[
Lx(t) = x(t)- x(0)
\]
and let $ N:\ov\Omega\times[0,\e_0]\to Y $ be given by $$ N(x,\e)(t) = \int_0^t \, f(s,x(s), \e)d s.$$

In order to see that $N$ is well defined, let $r_V>0$ satisfy $|(x(t),\e)|<r_V$ for every $(t,x,\e)\in[0,T]\times \ov \Omega\times [0,\e_0].$ From condition {\bf A.3}, there exists an integrable function $g_{r_V}:[0,T]\to\R$ satisfying $|f(t,x,\e)|\leq g_{r_V}(t)$ for every $(x,\e)\in\ov V\times[0,\e_0]$ and for almost every $t\in[0,T].$ Thus, taking conditions {\bf A.2} and {\bf A.3} into account, it follows that the integral $N(x,\e)(t)$ is well defined for each $(x,\e)\in\ov\Omega\times[0,\e_0]$ and for every $t\in[0,T]$. In addition, consider the function $G_{r_V}:[0,T]\to\R$ given by
\[
G_{r_V}(t)=\int_0^t g_{r_V}(s)ds.
\]
From the dominate convergence theorem, $G_{r_V}$ is continuous on $[0,T]$ and, therefore, uniformly continuous. In addition, one can see that
\begin{equation}\label{rel1}
|N(x,\e)(t)|\leq G_{r_V}(t),
\end{equation}
for every $t\in[0,T]$, and
\begin{equation}\label{rel2}
|N(x,\e)(t)-N(x,\e)(\tau)|\leq |G_{r_V}(t)-G_{r_V}(\tau)|,
\end{equation}
for every $t,\tau\in[0,T]$. The relationship \eqref{rel1} implies that $N(x,\e)(0)=0$. The relationship \eqref{rel2} and the uniform continuity of $G_{r_V}$ imply that $t\mapsto N(x,\e)(t)$ is continuous. Hence, $N(x,\e)\in Y$ for every $(x,\e)\in\ov\Omega\times[0,\e_0]$, which implies that $N$ is well defined.

Now, given $(x^*,\e^*)\in\ov\Omega\times[0,\e_0],$ consider a sequence $(x_n,\e_n)\in\ov\Omega\times[0,\e_0]$ such that $(x_n,\e_n)\to(x^*,\e^*)$. For $t\in[0,T]$, define \[
\Delta_n(t)=f(t,x^*(t),\e^*)-f(t,x_n(t),\e_n).
\]
Notice that
\[
|N(x^*,\e^*)-N(x_n,\e_n)|\leq\int_0^T|\Delta_n(s)|ds.
\]
Since, for almost every $t\in[0,T]$, $\lim\Delta_n(t)=0$ (by {\bf A.1}) and $|\Delta_n(t)|\leq 2g_{r_V}(t)$ (by {\bf A.3}), again from the dominate convergence theorem, one has that
\[
\lim |N(x^*,\e^*)-N(x_n,\e_n)|=\lim \int_0^T|\Delta_n(s)|ds=0,
\]
which implies that $N$ is continuous.

\smallskip

Now, consider the operator equation
\begin{equation}\label{eq:op1}
Lx =\e N(x,\e), \,\, x\in\ov\Omega.
\end{equation}
Notice that $x\in \ov\Omega$ is a solution of \eqref{eq:op1} if, and only if, it can can be continued to a $ T-$periodic solution of the differential equation \eqref{eq:e1} in $\ov V.$

In what follows, the conditions of Theorem \ref{t.aux} will be verified for the operator equation \eqref{eq:op1}.

First, notice that  $\Im L=X\cap Y$ is closed in $Y$ and
\[
\ker L = \big\{ x\in X:\, x(t) = z\in\R^n,\,\forall\, t\in[0,T]\big\}
\] 
 is isomorphic to $ \R^n.$  It is easy to see that $\coker L$ is also isomorphic to $\R^n$. Hence, $L$ is Fredholm of index $0$.

In order to see that $ N $ is $ L- $compact on $\ov\Omega\times [0,\e_0]$,  let $ P:X\to X $ and $ Q:Y\to Y $ be continuous projections given, respectively, by
\[
 Px(t) = x(0)\,\, \text{ and }\,\, Qy(t) = t\dfrac{y(T)}{T}, \,\, \text{ for }\,\, t\in [0,T].
\]
Notice that Property \eqref{fred} is satisfied for $P$ and $Q$. Clearly, $K_{P,Q}N$ and $QN$ are continuous. Moreover, taking into account \eqref{rel1}, \eqref{rel2}, and the Arzel\`{a}-Ascoli Theorem, one can see that the families of continuous functions $K_{P,Q}N(\ov \Omega\times[0,\e_0])$ and $QN(\ov\Omega \times[0,\e_0])$  are relatively compact on $X$ and $Y$, respectively. Hence, $ N$ is $ L- $compact on $\ov\Omega\times[0,\e_0]$.

 		Now, let $ x\in\partial\Omega\cap\ker L.$ Notice that $ x(t)\equiv z\in\partial V.$ Thus, 
				\begin{equation}\label{QN}
		QN(x,0)(t) = \dfrac{t}{T}\int_0^T f(s,z,0)d s =t f_1(z).
		\end{equation}
		By hypothesis, $f_1(z)\neq0$ for every $z\in\p V,$ so $QN(x,0)\neq0$ for every  for $ x\in\partial\Omega\cap \ker L.$ 
		  Therefore,  condition {\bf H.1}  of Theorem \ref{t.aux}  holds for the operator equation \eqref{eq:op1}. 
		  
		  In addition, let the isomorphism $J:\Im Q\to\ker L$  be given by $$J\,y(t)=\dfrac{y(T)}{T}.$$
From \eqref{QN},  $JQN(x,0)=f_1(z),$ for every $x\in\Omega\cap\ker L$ (in this case, $x(t)\equiv z\in V$).  Thus,
		\[ d_B(JQN(\cdot,0)\big|_{\Omega\cap \ker L },\Omega\cap\ker L ,0) = d_B(f_1,V ,0). \]
		 By hypothesis, $d_B(f_1,V ,0)\neq0,$ so condition {\bf H.2} holds  for the operator equation \eqref{eq:op1}.
		 
		 Thus, Theorem \ref{t.aux} provides the existence of $\e_V\in(0,\e_0]$ such that, for each $\e\in[0,\e_V]$, the operator equation \eqref{eq:op1} has a solution $x_{\e}\in \Omega$. Hence, for each $\e\in[0,\e_{V }],$ $\f(t,\e)=x_{\e}(t)$ is a $T-$periodic solution of the Carath\'{e}odory differential equation \eqref{eq:e1} satisfying  $\f(t,\e)\in V $ for every $t\in[0,T].$  
		 \end{proof}

\begin{proposition}\label{prop2}
In addition to  hypotheses of Proposition \ref{prop1}, assume that there exists $z^*\in V$ such that $f_1(z^*)=0$ and $f_1(z)\neq0$ for every $z\in \ov{V}\setminus\{z^*\}.$ Then, for each $\e\in[0, \e_V]$, the differential equation \eqref{eq:e1} has a $T-$periodic solution $\f(t,\e)$ satisfying $\f(\cdot,\e)\to z^*$, uniformly, as $\e\to 0.$
\end{proposition}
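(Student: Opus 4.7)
The plan is to argue by contradiction using a compactness argument. Suppose the conclusion fails; then there exist $\delta>0$ and a sequence $\e_n\to 0^+$ (with $\e_n\in[0,\e_V]$) such that $\|\f(\cdot,\e_n)-z^*\|_\infty\ge\delta$ for all $n$, where $\f(\cdot,\e_n)$ is the $T$-periodic solution given by Proposition \ref{prop1}. The goal is to extract a uniformly convergent subsequence, identify its limit as a constant zero of $f_1$, and invoke the uniqueness of $z^*$ to obtain a contradiction.

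First I would verify the hypotheses of the Arzel\`{a}--Ascoli theorem for the family $\{\f(\cdot,\e_n)\}_n$. Uniform boundedness is immediate since $\f(t,\e_n)\in V$ and $\ov V$ is bounded. For equicontinuity, since $\f(\cdot,\e_n)$ solves \eqref{eq:e1}, for every $t,\tau\in[0,T]$
\[
|\f(t,\e_n)-\f(\tau,\e_n)|\le \e_n\left|\int_\tau^t |f(s,\f(s,\e_n),\e_n)|\,d s\right|\le \e_n\, |G_{r_V}(t)-G_{r_V}(\tau)|,
\]
with $G_{r_V}$ as in the proof of Proposition \ref{prop1}. This bound not only proves equicontinuity, it shows that the modulus of continuity of $\f(\cdot,\e_n)$ tends to $0$. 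After passing to a subsequence (still denoted $\e_n$), Arzel\`{a}--Ascoli yields a continuous $\f^*\colon[0,T]\to\ov V$ with $\f(\cdot,\e_n)\to\f^*$ uniformly. Passing to the limit in the displayed inequality shows $|\f^*(t)-\f^*(\tau)|=0$, so $\f^*\equiv z^{**}$ for some constant $z^{**}\in\ov V$.

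The key step is to show that $z^{**}$ is a zero of the averaged function. Using the $T$-periodicity of $\f(\cdot,\e_n)$ and the integral form of \eqref{eq:e1},
\[
0=\f(T,\e_n)-\f(0,\e_n)=\e_n\int_0^T f(s,\f(s,\e_n),\e_n)\,d s,
\]
so $\int_0^T f(s,\f(s,\e_n),\e_n)\,d s=0$ for every $n$. Since $\f(\cdot,\e_n)\to z^{**}$ uniformly, condition \textbf{A.1} gives $f(s,\f(s,\e_n),\e_n)\to f(s,z^{**},0)$ for almost every $s\in[0,T]$, while \textbf{A.3} provides the integrable dominator $g_{r_V}$. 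The dominated convergence theorem then yields
\[
0=\lim_{n\to\infty}\int_0^T f(s,\f(s,\e_n),\e_n)\,d s=\int_0^T f(s,z^{**},0)\,d s=T\,f_1(z^{**}).
\]
Hence $f_1(z^{**})=0$ with $z^{**}\in\ov V$; the uniqueness hypothesis forces $z^{**}=z^*$, contradicting $\|\f(\cdot,\e_n)-z^*\|_\infty\ge\delta$ for all $n$.

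I expect the main subtlety to be the application of the dominated convergence theorem: one must carefully combine \textbf{A.1} (almost everywhere continuity of $f$ in $(x,\e)$) with the \emph{uniform} convergence $\f(\cdot,\e_n)\to z^{**}$ to pass to the limit inside the integral, then use \textbf{A.3} to justify dominated convergence. Everything else -- the compactness extraction and the reduction to a constant limit -- follows routinely from estimates already established in the proof of Proposition \ref{prop1}.
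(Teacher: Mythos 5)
Your proof is correct, but it takes a genuinely different route from the paper's. The paper re-applies Proposition \ref{prop1} on a family of shrinking balls $V_\mu=B(z^*,\mu)\subset V$: the excision properties {\bf P.1}--{\bf P.2} give $d_B(f_1,V_\mu,0)=d_B(f_1,V,0)\neq0$, so for each $\mu$ there is a parameter range $[0,\e_{V_\mu}]$ on which a periodic solution is trapped in $B(z^*,\mu)$, and a final selection argument (splitting into the cases where the accumulation value of the $\e_{V_{\mu_n}}$ is positive or zero) assembles these into a single family converging to $z^*$. You instead take the family already produced by Proposition \ref{prop1} on $V$ and show \emph{a posteriori} that it must converge: uniform boundedness together with the estimate $|\f(t,\e_n)-\f(\tau,\e_n)|\le\e_n|G_{r_V}(t)-G_{r_V}(\tau)|$ yields, via Arzel\`{a}--Ascoli, a subsequential limit that is forced to be a constant $z^{**}\in\ov V$; periodicity and dominated convergence (using {\bf A.1} and {\bf A.3} exactly as in the continuity argument for $N$ in the proof of Proposition \ref{prop1}) force $f_1(z^{**})=0$, hence $z^{**}=z^*$ by the uniqueness hypothesis, contradicting $\|\f(\cdot,\e_n)-z^*\|_\infty\ge\delta$. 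Your argument avoids both the degree excision and the case analysis of the selection step, and it proves slightly more: \emph{every} family of $T$-periodic solutions contained in $V$ converges uniformly to $z^*$ as $\e\to0^+$, not just a specially constructed one. The paper's approach, in exchange, uses Proposition \ref{prop1} purely as a black box and needs no further analysis of the solutions themselves. One small point worth making explicit in your write-up: cancelling $\e_n$ to obtain $\int_0^T f(s,\f(s,\e_n),\e_n)\,ds=0$ requires $\e_n>0$, which your choice $\e_n\to0^+$ ensures.
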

\begin{proof} 

Let $\mu_0>0$ satisfy $V_\mu: = B(z^*,\mu)\subset V$ for every $\mu\in(0,\mu_0]$. 

By hypothesis, $f_1(z)\neq0,$ for every $x\in \p V_{\mu}$ and $\mu\in(0,\mu_0].$  

Also,  Properties {\bf P.1} and {\bf P.2} of the Brouwer degree imply that 
\[
d_B(f_1,V_{\mu},0)=d_B(f_1,V,0)\neq 0,
\]
 for every $\mu\in(0,\mu_0]$. Indeed, consider $V'=\inte(V\setminus V_{\mu})$, since $0\notin f_1(V')$, Property {\bf P.1} implies that $d_B(f_1,V',0)=0$. Now, since $0\notin f_1\big(\ov V\setminus(V_\mu\cup V')\big),$  Property {\bf P.2} implies that $$d_B(f_1,V,0)=d_B(f_1,V_{\mu},0)+d_B(f_1,V',0)=d_B(f_1,V_{\mu},0).$$

Hence, all the hypotheses of Proposition \ref{prop1} are satisfied by taking $V_{\mu}$ instead of $V$. Consequently, for each $\mu\in(0,\mu_0]$, 
there exists $\e_{V_{\mu}}>0$ such that, for each $\e\in[0, \e_{V_{\mu}}]$, the differential equation \eqref{eq:e1} has a $T-$periodic solution $\f^{\mu}(t,\e)$ satisfying $\f^{\mu}(t,\e)\in  V_{\mu},$ for every $t\in[0,T].$

Finally, a $T$-periodic solution $\f(t,\e)$ of the differential equation \eqref{eq:e1} converging uniformly to the constant function  $z^*$ will be obtained from the family of periodic solutions $\{\f^{\mu}:\R\times [0,\e_{V_{\mu}}]\rightarrow\R^n,\,\mu\in(0,\mu_0]\}$ obtained above. 

Let $\ov n\in\N$ satisfy $1/\ov n<\mu_0$. For each $n\in\N$, set $\mu_n=1/(\ov n+n)\in (0,\mu_0]$ and denote $\f_n=\f^{\mu_n}$ and $\e_n=\e_{V_{\mu_n}}\in(0,\e_0]$. Notice that, for each $n\in\N,$  $\f_n(t,\e)\in V_{\mu_n}=B(z^*,\mu_n)$ for every $t\in[0,T]$ and $\e\in[0,\e_n]$. Without loss of generality, one can assume that the sequence $\e_n$ converges to $\e^*\in[0,\e_0].$ From here, the cases $\e^*>0$ and $\e^*=0$ will be distinguished.

If $\e^*>0,$ then there exists $\ov\e\in(0,\e_0]$ and $n_0>0$ such that $[0,\ov\e]\subset[0,\e_n]$ for every $n>n_0$. Thus, the sequence of periodic solutions $\f_n(t,\e), n>n_0,$ satisfies $\f_n(t,\e)\in B(z^*,\mu_n)$ for every $t\in[0,T]$ and $\e\in[0,\ov \e]$. Thus, $\f_n\to z^*$ uniformly and, consequently, $\f(t,\e)=z^*$ is a periodic solutions of \eqref{eq:e1} for every $\e\in[0,\ov\e]$. 

Otherwise, if $\e^*=0,$ let $\e_M=\max\{\e_n:\,n\in\N\}.$ Then, for each $\e\in(0, \e_M]$ define
\[
n_{\e}=\max\{n\in\N:\,\e_n\geq\e\}.
\]
Notice that $n_{\e}\to \infty$ as $\e\to0$. Therefore, taking $\f(t,\e)=\f_{n_{\e}}(t,\e),$ one can see that $\f(t,\e)\in B(z^*,\mu_{n_{\e}}).$ Hence, $ \f(\cdot,\e)\to z^*,$ uniformly, as $\e\to 0.$
\end{proof}

\section{examples}\label{sec:exa}

Given $\e_0>0$ and $A=\{(x,\dot x)\in \R^2: r_0\leq \sqrt{x^2+\dot x^2}\leq r_1\}$, with $r_1>r_0>0,$ consider the following second order differential equation
\begin{equation}\label{eq:ex}
\ddot x=-x +\e\, g(x,\dot x,\e),\quad (x,\dot x,\e)\in A\times [0,\e_0].
\end{equation}
 Assume that $$\tilde g(\T,r,\e)=g(r \cos\T,r\sin\T,\e)$$ satisfies {\bf A.1}-{\bf A.3}.
Applying the polar change of coordinates, $(x,\dot x)=(r \cos\T,r\sin\T),$ and taking $ \theta $ as the new time variable, the second order differential equation \eqref{eq:ex} becomes
\begin{equation}\label{eq:trans}
\begin{array}{rl}
\dfrac{d r}{d\T}=\e f(\T,r,\e),
\end{array}
\end{equation}
where $(\T,r,\e)\in \R\times[r_0,r_1]\times[0,\e_0]$ and
\[
f(\T,r,\e)=-\dfrac{r\,\tilde g(\T,r,\e)\sin\T}{r-\e \tilde g(\T,r,\e)\cos\T}.
\]
Notice that, by taking $\e_0>0$ smaller if necessary, $f$ also satisfies {\bf A.1}-{\bf A.3}. In addition,
\[
f_1(r)=-\dfrac{1}{2\pi}\int_0^{2\pi} \tilde g(\T,r,0)\sin\T \, d\T.
\]

\begin{example}
Suppose that $[1,2]\subset(r_0,r_1)$ and let
\[
g(x,y,0)=\sgn\big(y(x^2+y^2-1)\big)\max\big\{0,(x^2+y^2-1)(x^2+y^2-4)\big\}.
\]
Notice that $$\tilde g(\T,r,0)=\sgn\big((r^2-1)\sin \T\big)\max\big\{0,(r^2-1)(r^2-4)\big\}$$
satisfies {\bf A.1}-{\bf A.3} and
\[
f_1(r)=\begin{cases}
\dfrac{2}{\pi}(r^4-5 r^2+4),& r<1,\\
0,& 1\leq r\leq2,\\
-\dfrac{2}{\pi}(r^4-5 r^2+4),& r>2.
\end{cases}
\]
Now, let $V$ be any open interval in $[r_0,r_1]$ containing $[1,2]$. It is easy to construct a homotopy $\{g_{\sigma}:\ov{V}\to\R^n\, |\, \sigma\in [0,1] \} $ between $f_1\big|_{\ov V}$ and $g_1:r\mapsto 3/2-r,\,r\in \ov V,$ such that $ 0\notin g_{\sigma}(\p V),\,\forall \sigma\in [0, 1]$ (see Figure \ref{fig1}). 
\begin{figure}[H]
	\begin{center}
		\begin{overpic}[width=10.5cm]{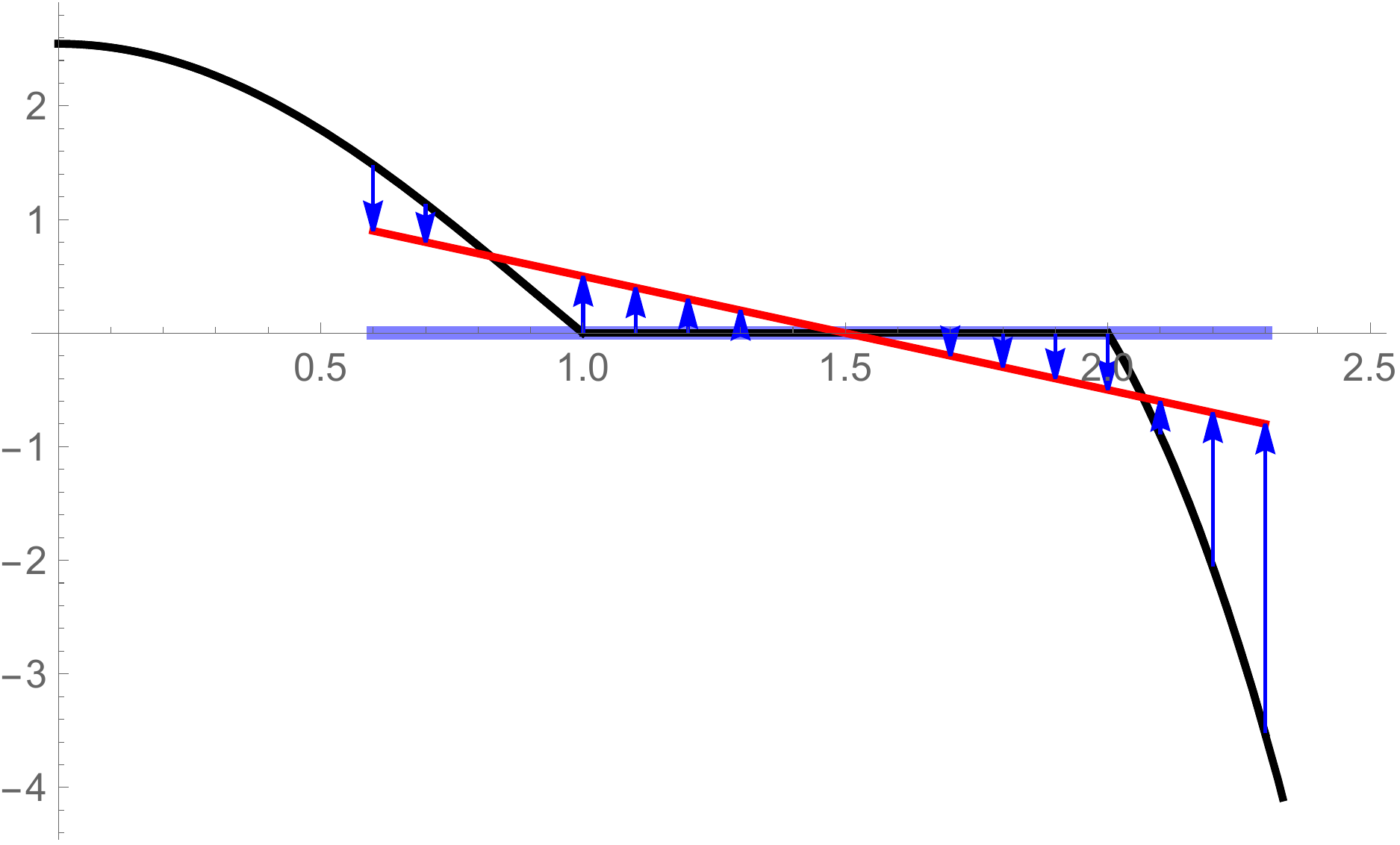}
		\put(92,3){$f_1(r)$}
		\put(92,29){$g_1(r)$}
		\put(101,36){$r$}
		\put(88,40){$V$}
		\end{overpic}
		\caption{Illustration of the homotopy $g_{\sigma}$ between $f_1(r)$ and $g_1(r)=3/2-r$. Notice that  $ 0\notin g_{\sigma}(\p V),\,\forall \sigma\in [0, 1]$.}
	\label{fig1}
	\end{center}
	\end{figure}

\noindent Property {\bf P.3} of the Brouwer degree implies $$ d_B(f_1,V,0)=d_B(g_1,V,0)=-1$$ (see Remark \ref{rem}). Hence, Theorem \ref{main} guarantees, for $\e>0$ sufficiently small, the existence of a periodic solution $r(\T,\e)$ of the differential equation \eqref{eq:trans} satisfying $r(\T,\e)\in V$ for every $\T\in\R$. This corresponds to a periodic solution $(x(t,\e),\dot x(t,\e))$ of the second order differential equation \eqref{eq:ex} satisfying $(x(t,\e),\dot x(t,\e))\in A_V$ for every $t\in\R,$ where $A_V=\{(x,\dot x)\in \R^2: \sqrt{x^2+\dot x^2}\in V\}.$
\end{example}

\begin{example}
Suppose that $r_0<1<r_1$ and let
\[
g(x,y,0)=\sgn(y)\sqrt[3]{x^2+y^2-1}.
\]
Notice that $$\tilde g(\T,r,0)=\sgn(\sin \T)\sqrt[3]{r^2-1}$$ satisfies {\bf A.1}-{\bf A.3} and
\[
f_1(r)=-\dfrac{2}{\pi}\sqrt[3]{r^2-1}.
\]
Moreover, $r^*=1$ is the unique zero of $f_1$. Thus, let $V$ be any open interval in $[r_0,r_1]$ containing $r^*$. Again, it is easy to construct a homotopy $\{g_{\sigma}:\ov{V}\to\R^n\, |\, \sigma\in [0,1] \} $ between $f_1\big|_{\ov V}$ and $g_1:r\mapsto 1-r,\,r\in \ov V,$ such that $ 0\notin g_{\sigma}(\p V),\,\forall \sigma\in [0, 1]$ (see Figure \ref{fig2}). 

	\begin{figure}[H]
	\begin{center}
		\begin{overpic}[width=10.5cm]{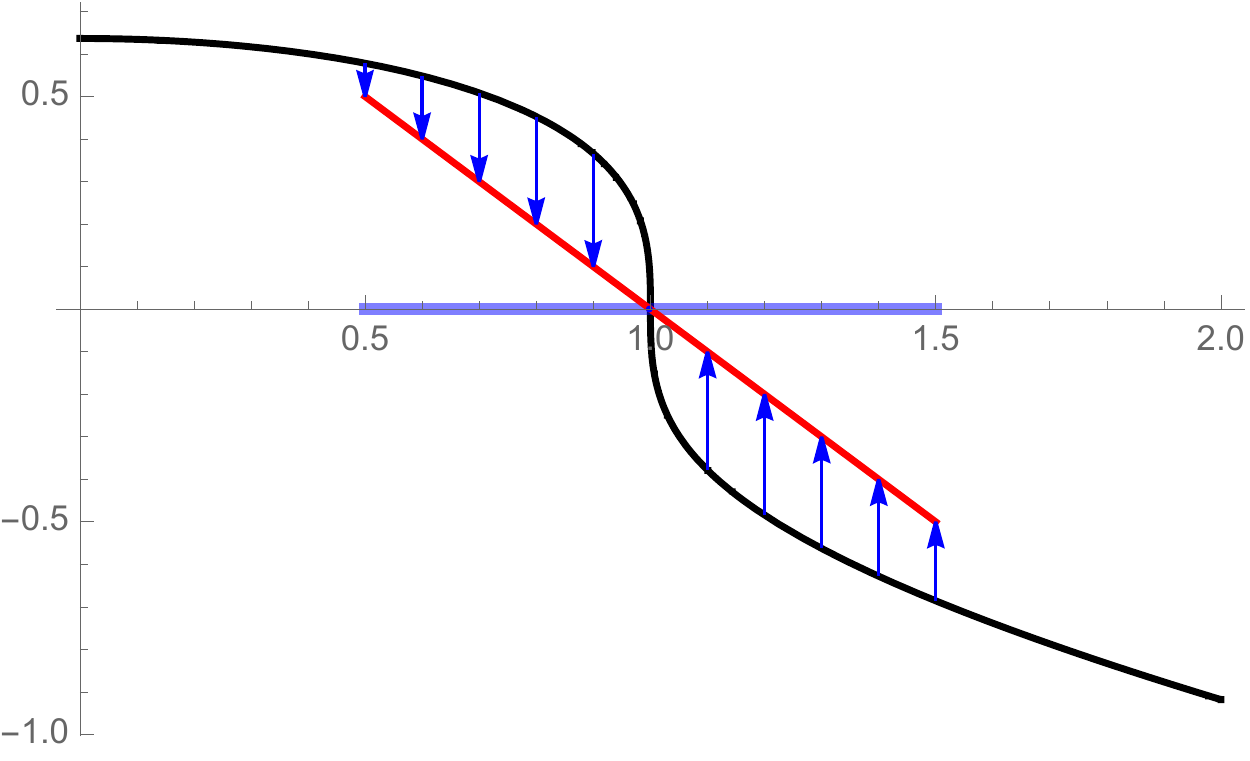}
		\put(94,9){$f_1(r)$}
		\put(76,19){$g_1(r)$}
		\put(102,35.5){$r$}
		\put(73,38){$V$}
		\end{overpic}
\caption{Illustration of the homotopy $g_{\sigma}$ between $f_1(r)$ and $g_1(r)=1-r$.}
	\label{fig2}
	\end{center}
	\end{figure}
\end{example}

\noindent Again, Property {\bf P.3} of the Brouwer degree implies $$ d_B(f_1,V,0)=d_B(g_1,V,0)=-1$$ (see Remark \ref{rem}). Hence, Theorem \ref{main} guarantees, for $\e>0$ sufficiently small, the existence of a periodic solution $r(\T,\e)$ of the differential equation \eqref{eq:trans} satisfying $r(\cdot,\e)\to r^*$, uniformly, as $\e\to0$. This corresponds to a periodic solution $(x(t,\e),\dot x(t,\e))$ of the second order differential equation \eqref{eq:ex} satisfying $|(x(\cdot,\e),\dot x(\cdot,\e))|\to r^*$, uniformly, as $\e\to0$.

\begin{remark}
 The procedure employed in the examples above for computing explicitly the Brouwer degree of $(f_1,V,0)$ always works for one-dimensional functions under similar suitable conditions, namely, if $a,b\in\R$, $a<b$, $V = [a,b],$ and $f : V \to \R$ is continuous with $f(a)f(b) < 0$, then $d_B(f,V,0) = \sgn(f(b)-f(a))$.
\end{remark}

\section*{Acknowledgements}

The author thanks the referee for the constructive comments and suggestions which led to an improved version of the paper.

DDN is partially supported by S\~{a}o Paulo Research Foundation (FAPESP) grants 2018/16430-8, 2018/ 13481-0, and 2019/10269-3, and by Conselho Nacional de Desenvolvimento Cient\'{i}fico e Tecnol\'{o}gico (CNPq) grants 306649/2018-7 and 438975/ 2018-9.

\bibliographystyle{abbrv}
\bibliography{references.bib}
\end{document}